\newtheoremstyle{colon}%
{}
{}
{\itshape}
{}
{\bfseries}
{:}
{ }
{}
\theoremstyle{colon}
\newtheorem{lemma}{Lemma}[]
\newtheorem{theorem}[lemma]{Theorem}
\newtheorem{definition}[lemma]{Definition}
\newtheorem{remark}[lemma]{Remark}
\title{The equivariant covering homotopy property}
\author{Andrew Ronan}
\date{}
\begin{document}
	
	\maketitle
	
	\begin{abstract}
		In this paper, we explain how the more general context of generalised equivariant bundles allows for a simple proof of the ECHP, which makes use of induction on the dimension / no. of connected components of compact Lie groups. We also make clear the link between the ECHP and the theory of Hurewicz fibrations.
	\end{abstract}

\section{Introduction} 

Generalised equivariant bundles were first introduced by Lashof and May in \cite{LM86} to encompass examples of equivariant maps which morally should be bundles, but did not fit into the previously existing frameworks of \cite{L82}, \cite{D69} or \cite{N78}. The canonical example is the quotient of a compact Lie group by a closed normal subgroup, $q: \Gamma \to \Gamma / \Pi$, which should be a principal bundle, but other examples are given in \cite[Examples 1 and 2]{LM86}. It was noted on \cite[pg.175]{LMS86} that working with generalised equivariant bundles is often notationally simpler than working with the less general definitions, and our work here is certainly consistent with that observation. However, as will exhibit with a new inductive proof of the ECHP, \cite[Corollary 8]{LM86}, working with generalised equivariant bundles has a number of advantages which extend beyond notational simplicity.

The present paper has two objectives. The primary objective, which is the main original contribution, is the aforementioned new proof of the ECHP. In particular, we show that the more general context of generalised equivariant bundles allows for a simple proof of the ECHP, which makes use of induction on the dimension / no. of connected components of compact Lie groups at a key step and illuminates the underlying mathematics. Our proof does not depend on Bierstone's condition (\cite[pg.258]{L82}, \cite[pg. 620]{B73}), as is the usual route, which is a property specific to the less general $G$-$A$ (or $(A; A \times G)$) bundles of \cite{L82} (see also Remark \ref{echp66}). Another bonus of the proof is that we do not require any additional local patching result specific to bundles, such as in \cite[Ch. 4.9]{H93}, instead making use of the local patching property of Hurewicz fibrations (Lemma \ref{echp9} below) which will already be familiar to most homotopy theorists. Note, however, that a local to global gluing result of some variety is still necessary for the proof of the ECHP, and the proof of Lemma \ref{echp9} is just as technical as the proof in \cite[Ch. 4.9]{H93}.  We also note that the conclusion of the classical proof of the ECHP is particularly complex and confusing, involving the identification of $G \to G/H$ as an $(H \times H; H \times H \times H)$-bundle with fibre $H$ and the paracompactness of the path space $P(G/H)$, see \cite[pg. 266]{L82}.

The second objective is to give a self-contained introduction to the theory of generalised equivariant bundles, which we hope will convince the reader of the simplicity of the ideas. Therefore, we give clear definitions of all the concepts we use in our proof of the ECHP, with the aim of making this introduction accessible to as wide an audience as possible. The interested reader can then learn about the relationship between universal principal bundles and equivariant Eilenberg-MacLane spaces from \cite[V.4, VII.2]{M96}, and the fixed point structure of principal bundles from \cite[Theorem 12]{LM86}. 

\subsection{Notations and Conventions}
We work throughout with a continuous extension of compact Lie groups:	
\[
1 \to \Pi \to \Gamma \to G \to 1
\]	
and we assume that all subgroups of compact Lie groups under discussion are closed. We also work in the category of CGWH spaces, \cite{S09}.

\subsection{Acknowledgments} I would like to thank the referee and editor for a number of helpful suggestions.

\section{Generalised equivariant bundles and the ECHP}

In this section, we introduce generalised equivariant bundles and the ECHP. We begin with our favoured definition of a $(\Pi; \Gamma)$-bundle with fibre $F$, which makes clear that the bundles under discussion really are a simple generalisation of the `non-equivariant' $(\Pi; \Pi)$ case, compare \cite[Definition 2.4]{S51}:

\begin{definition} \label{echp24}
	Let $F$ be a $\Gamma$-space on which $\Pi$ acts effectively and let $\sigma: D \to B$ be a map of $G$-spaces. A chart for $\sigma$ is a pair $(U, \psi)$ such that $U$ is an open subset of $B$ and $\psi: U \times F \to \sigma^{-1}(U)$ is a homeomorphism over $U$, also known as a trivialisation: 
	
	\[
	\begin{tikzcd}
		U \times F \arrow{rr}{\psi} \arrow{dr} & & \sigma^{-1}(U) \arrow{dl} \\
		& U & 
	\end{tikzcd}	
	\]
	
	By an atlas for $\sigma$, we mean a set of charts for $\sigma$, $\mathcal{C}$, such that:
	
	\begin{enumerate}
		\item for every $b \in B$, there exists a chart $(U, \psi) \in \mathcal{C}$ with $b \in U$,
		\item if $(U_i,\psi_i), (U_j,\psi_j) \in \mathcal{C}$, $\gamma \in \Gamma$, $g$ is the image of $\gamma$ in $G$, $u \in U_i$ and $gu \in U_j$, then the composite:	
		\[
		F \cong \{u\} \times F \xrightarrow{\gamma^{-1}} \{u\} \times F \xrightarrow{\psi_j^{-1} g \psi_i} \{gu\} \times F \cong F
		\]
		
		is multiplication by some $\pi \in \Pi$.
	\end{enumerate}
	
A maximal atlas for $\sigma$ is an atlas for $\sigma$ which is not a subset of any other atlas for $\sigma$.

A $(\Pi; \Gamma)$-bundle with fibre $F$, $(\sigma, \mathcal{C})$ is a map of $G$-spaces $\sigma: D \to B$ equipped with a maximal atlas for $\sigma$. Note that the maximal atlas, $\mathcal{C}$, is often dropped from the notation.
	
	Given a $(\Pi; \Gamma)$-bundle with fibre $F$, $(\sigma, \mathcal{C})$, a map $F \to D$ is said to be admissible if it factors through a fibre $\sigma^{-1}(b)$ and the composite with any trivialisation $F \to \sigma^{-1}(b) \xrightarrow{\psi_i^{-1}} \{b\} \times F \cong F$ is multiplication by some $\pi \in \Pi$.
	
	Finally, given $(\Pi; \Gamma)$-bundles with fibre $F$, $(\sigma_1: D_1 \to B_1, \mathcal{C}_1)$ and $(\sigma_2: D_2 \to B_2, \mathcal{C}_2)$, a map $(\sigma_1, \mathcal{C}_1) \to (\sigma_2, \mathcal{C}_2)$ is defined to be a $G$-map, $D_1 \to D_2$, that takes admissible maps to admissible maps. 
	
\end{definition}

\begin{remark} \label{echp60} 
	If $(\sigma, \mathcal{C})$ is a $(\Pi; \Gamma)$-bundle with fibre $F$ and $(U_i, \psi_i), (U_j, \psi_j) \in \mathcal{C}$, then $\psi_j^{-1} g \psi_i \gamma^{-1}$ defines a transition map $U_i \cap g^{-1}(U_j) \to Aut(F)$. Since $F$ is a $\Gamma$-space on which $\Pi$ acts effectively, we have an injective map $\Pi \to Aut(F)$, which is a homeomorphism onto its image since $\Pi$ is compact Hausdorff and we are working in the category of CGWH spaces, \cite[Lemma 1.4(b)]{S09}. Condition (2) of Definition \ref{echp24} then tells us that the transition map $U_i \cap g^{-1}(U_j) \to Aut(F)$ has image in $\Pi$ and so defines a continuous map $U_i \cap g^{-1}(U_j) \to \Pi$. This explains why we do not need a continuity condition akin to \cite[pg.8 Condition (10)]{S51} in our definition of a $(\Pi; \Gamma)$-bundle with fibre $F$.
\end{remark}

If $(\sigma: D \to B, \mathcal{C})$ is a $(\Pi; \Gamma)$-bundle with fibre $F$, we can define an atlas for $\sigma \times 1: D \times I \to B \times I$ by $\mathcal{C}^{'} := \{(U \times I, \psi \times 1) | (U,\psi) \in \mathcal{C}\}$. We then view $\sigma \times 1$ as a $(\Pi; \Gamma)$-bundle with fibre $F$ equipped with the unique maximal atlas, which we call $\mathcal{C} \times 1$, containing $\mathcal{C}^{'}$.
As in \cite[Corollary 3.3]{B73}, the ECHP for $(\Pi; \Gamma)$-bundles with fibre $F$ is defined as follows:

\begin{definition} \label{echp20}
Let $\sigma_2 : D_2 \to B_2$ be a $(\Pi;\Gamma)$-bundle with fibre $F$. We say that $\sigma_2$ satisfies the ECHP if whenever:
	
	\[
	\begin{tikzcd}
		D_1 \arrow{r}{f} \arrow[swap]{d}{\sigma_1} & D_2 \arrow{d}{\sigma_2} \\
		B_1   \arrow[swap]{r}{h} & B_2 
	\end{tikzcd}
	\]
	
	is a map of $(\Pi; \Gamma)$-bundles with fibre $F$, and $H: B_1 \times I \to B_2$ is a $G$-homotopy starting at $h$, then there exists a map of $(\Pi; \Gamma)$-bundles with fibre $F$ of the form:
	
	\[
	\begin{tikzcd}
		D_1 \times I \arrow{r}{K} \arrow[swap]{d}{\sigma_1 \times 1} & D_2 \arrow{d}{\sigma_2} \\
		B_1 \times I  \arrow[swap]{r}{H} & B_2 
	\end{tikzcd}
	\]
	
	which restricts to the original map of bundles at $0 \in I$.
\end{definition}

The main theorem concerning the ECHP is then as follows:

\begin{theorem} \label{echp22}
	A numerable $(\Pi; \Gamma)$-bundle with fibre $F$, see Definition \ref{echp65}, satisfies the ECHP.
\end{theorem}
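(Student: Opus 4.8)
The plan is to deduce the equivariant, fibre-preserving statement of Definition~\ref{echp20} from the purely homotopy-theoretic fact that $\sigma_2$ is a Hurewicz fibration, letting the generalised-bundle structure carry out all the equivariant bookkeeping for free. First I would turn the covering homotopy problem into an extension problem by pulling back. Given a map of bundles $(f,g)\colon \sigma_1 \to \sigma_2$ together with a $G$-homotopy $G\colon B_1\times I \to B_2$ of $g$, I form the pullback $G^{*}\sigma_2$ over $B_1\times I$. Since a map of $(\Pi;\Gamma)$-bundles with fibre $F$ lying over the identity of the base is necessarily a fibrewise isomorphism, producing the desired lift $F$ is the same as extending the bundle isomorphism $\sigma_1 \cong g^{*}\sigma_2$ prescribed by $f$ over $B_1\times\{0\}$ to an isomorphism $\sigma_1\times 1 \cong G^{*}\sigma_2$ over all of $B_1\times I$. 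As pullbacks of numerable bundles are numerable, this recasts the ECHP as the \emph{homotopy invariance} of numerable $(\Pi;\Gamma)$-bundles: any such bundle over $B_1\times I$ is, compatibly with a given identification at time $0$, pulled back from $B_1\times\{0\}$.

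Second, I would reduce this extension to the Hurewicz fibration property. Over each trivialising set $U_i$ the bundle is the projection $U_i\times F \to U_i$, which is patently a Hurewicz fibration, so $\sigma_2$ is a Hurewicz fibration locally over its numerable cover. By the local patching property of Hurewicz fibrations (Lemma~\ref{echp9}) it is then a Hurewicz fibration globally, and hence enjoys the ordinary covering homotopy property. The required isomorphism over $B_1\times I$ is obtained by lifting the deformation retraction of $B_1\times I$ onto $B_1\times\{0\}$ against this fibration, with the lift assembled inductively over the members of the cover; here the numerability supplies the partition of unity that controls the induction and lets Lemma~\ref{echp9} stand in for the bundle-specific patching of \cite[Ch.~4.9]{H93}.

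The crux, and the step I expect to be the main obstacle, is upgrading a bare fibration lift to one that is simultaneously $G$-equivariant and a genuine bundle map, since an arbitrary lift satisfies neither constraint. This is precisely where the generalised-bundle framework pays off: the transition data take values in $\Pi$ and the fibre $F$ carries the full $\Gamma$-action covering the $G$-action on the base, so if the local lifts are built $\Gamma$-equivariantly through the trivialisations $\psi_i$, then both the fibre-preserving condition of Definition~\ref{echp20} and $G$-equivariance come for free and are preserved under the patching. In effect the $\Gamma$-structure encodes the equivariance that the classical argument must extract by hand, which is what allows us to bypass Bierstone's condition entirely and to keep the induction clean.
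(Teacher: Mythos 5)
Your proposal has a genuine gap, and it sits exactly at the point you yourself identify as ``the crux.'' You propose to deduce Definition~\ref{echp20} from the fact that $\sigma_2$ is a Hurewicz fibration, and then to claim that $G$-equivariance and the bundle-map property of the lift ``come for free'' from the generalised-bundle structure. But no mechanism is given for this: the trivialisations $\psi_i$ are over open sets $U_i$ that are not $G$-invariant, the $\psi_i$ themselves are not equivariant (equivariance only holds up to the $\Pi$-valued twisting in the definition), and there is no way to ``build the local lifts $\Gamma$-equivariantly through the $\psi_i$'' without further input. Indeed the paper explicitly cautions that, as far as is known, a $(\Pi;\Gamma)$-bundle with fibre $F$ that is a Hurewicz $G$-fibration need \emph{not} satisfy the ECHP of Definition~\ref{echp20}; so the implication you are trying to run is precisely the one that fails (or at least is open) in this generality. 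Relatedly, your appeal to Lemma~\ref{echp9} is misapplied: that lemma requires a $G$-numerable cover by $G$-invariant open sets, whereas the trivialising cover $\{U_i\}$ with $\sigma^{-1}(U_i)\cong U_i\times F$ is not such a cover. Over the genuine $G$-invariant numerable cover the local models are $(\Gamma\times_{\Lambda}S)\times_{\Pi}F \to (\Gamma\times_{\Lambda}S)\times_{\Pi}*$, which are not products, and proving that these are equivariant fibrations is the real content of the problem. Finally, even non-equivariantly, lifting the deformation retraction of $B_1\times I$ onto $B_1\times\{0\}$ against a Hurewicz fibration yields only a fibre homotopy equivalence, not a bundle isomorphism; upgrading it to an isomorphism is exactly the bundle-specific patching of \cite[Ch.~4.9]{H93} that you claim to bypass.

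The missing idea is the passage to the associated principal $(\Pi;\Gamma)$-bundle via the 1-1 correspondence. For principal bundles, any $\Gamma$-map into a principal bundle is automatically a map of principal bundles (Definition~\ref{echp21} together with Lemma~\ref{echp2}), so the bundle ECHP becomes literally equivalent to being a Hurewicz $\Gamma$-fibration --- this is what makes ``equivariance and bundle structure for free'' an actual theorem rather than a hope. The paper then proves that a numerable principal bundle $p\colon E\to B$ is an $h\Gamma$-fibration (Theorem~\ref{echp4}), via Lemma~\ref{echp9} applied to the $\Gamma$-numerable cover of $E$ and an induction over compact Lie subgroups $\Lambda$ with $\Lambda\cap\Pi=1$ (Lemma~\ref{echp3}), using the pullback and untwisting Lemmas~\ref{echp1} and~\ref{echp2}. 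Given that, the ECHP for fibre-$F$ bundles follows in three lines (Theorem~\ref{echp7}): convert the given bundle map to a $\Gamma$-map $\alpha\colon E_1\to E_2$ with $\alpha/\Pi=g$, lift the homotopy $\Gamma$-equivariantly through $E_2\to B_2$, and apply $-\times_{\Pi}F$ to descend. Your pullback reformulation in the first paragraph is fine as far as it goes, but without the detour through the principal bundle it leaves the equivariant lifting problem untouched.
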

The term `equivariant covering homotopy property' is also used in the context of Hurewicz $G$-fibrations, which we recall are defined as follows:

\begin{definition} \label{echp64} 
	A map $p: E \to B$ of $G$-spaces is said to be a Hurewicz $G$-fibration (shorthand h$G$-fibration) if it has the right lifting property, in the category of $G$-spaces, with respect to all maps of the form $X \times \{0\} \xrightarrow{1 \times i_0} X \times I$, where $X$ is a $G$-space.
\end{definition}

It is sometimes said that $p$ is a Hurewicz $G$-fibration if it `satisfies the ECHP', for example on \cite[pg.545]{BH02}. However, in this paper, we will only use the term ECHP when it refers to covering homotopy properties of bundles. Note also that at least some caution is required here. Let $\sigma_2: D_2 \to B_2$ be a $(\Pi; \Gamma)$-bundle with fibre $F$. Then, as we will see, if $\sigma_2$ satisfies the ECHP of Definition \ref{echp20}, then $\sigma_2$ is a Hurewicz $G$-fibration. Indeed, $\sigma_2$ satisfies the ECHP of Definition \ref{echp20} iff its associated principal bundle is an $h\Gamma$-fibration, and we can then apply Lemma \ref{echp5}. However, as far as we are aware, it is not true that if $\sigma_2$ is a Hurewicz $G$-fibration, then $\sigma_2$ necessarily satisfies the ECHP of Definition \ref{echp20} - therefore, the ECHP and the definition of a Hurewicz $G$-fibration do not coincide in this context. Note that a counterexample would have to be somewhat pathological since if $B_2$ is paracompact Hausdorff, and the total space of the associated principal bundle is completely regular, then $\sigma_2$ is numerable, by \cite[Propositions 4 and 5]{LM86}, and therefore satisfies the ECHP. Note that the definition of paracompactness in use in \cite{LM86} assumes that the space is also Hausdorff and so subordinate partitions of unity exist.

On the other hand, the situation for principal $(\Pi; \Gamma)$-bundles is simpler, where:

\begin{definition} \label{echp21}
	A principal $(\Pi; \Gamma)$-bundle is a $\Pi$-free $\Gamma$-space, $E$, such that there is a $\Pi$-equivariant open cover $\{U_i\}$ of $E$ such that, for every $i$, $U_i \cong \Pi \times V_i$ as $\Pi$-spaces, for some $V_i$ with trivial $\Pi$-action. Define $B := E / \Pi$ and let $p: E \to B$ denote the quotient map. A map between principal $(\Pi;\Gamma)$-bundles is defined to be a $\Gamma$-map between the underlying $\Gamma$-spaces.
\end{definition}

\begin{remark} \label{echp63}
	 Our convention is that $\Pi$ acts on itself by right multiplication, $\pi \cdot x = x \pi^{-1}$. This choice is made so that $\Pi \times_{\Pi} F$ agrees with the usual definition. Note, however, that if $\Pi$ acts on itself by left multiplication instead, this would not change the definition of a principal $(\Pi; \Gamma)$-bundle.
\end{remark}

\begin{remark} \label{echp36}
	We prefer to define principal $(\Pi; \Gamma)$-bundles as spaces rather than as maps in order to emphasise the useful perspective that the category of principal $(\Pi; \Gamma)$-bundles can be identified with a full subcategory of the category of $\Gamma$-spaces. This is a straightforward category to work with, and we feel it provides the simplest definition of principal bundles. For example, it is clear from this definition why universal principal bundles can be identified with equivariant Eilenberg-MacLane spaces, \cite[V.4, VII.2]{M96}. Nevertheless, at times we identify a principal $(\Pi; \Gamma)$-bundle, $E$, with the quotient map $p: E \to B := E / \Pi$. Of course, all definitions define equivalent categories of principal bundles.
\end{remark}

 We have a 1-1 correspondence between $(\Pi; \Gamma)$-bundles with fibre $F$ and principal $(\Pi; \Gamma)$-bundles, see \cite[pg. 179]{LMS86} or \cite[Theorem 2.29]{Z21}, which we now prove for the sake of completeness:

\begin{theorem} \label{echp61}
	There is an equivalence of categories between the category of principal $(\Pi; \Gamma)$-bundles and the category of $(\Pi; \Gamma)$-bundles with fibre $F$.
\end{theorem}
 
 \begin{proof}
 	If $F$ is empty, then we must have $\Pi = 1$ and $\Gamma = G$, in which case it is clear from the definitions that both categories are equivalent to the category of $G$-spaces. In particular, 	if $(\sigma: D \to B, \mathcal{C})$ is a $(\Pi; \Gamma)$-bundle with fibre $\emptyset$, we define the associated principal $(\Pi; \Gamma)$-bundle, $Ass(\sigma, \mathcal{C})$, to be $B$. 
 	
 Now assume that $F$ is non-empty and $x \in F$.	If $(\sigma: D \to B, \mathcal{C})$ is a $(\Pi; \Gamma)$-bundle with fibre $F$, then define the associated principal $(\Pi; \Gamma)$-bundle, $Ass(\sigma, \mathcal{C})$, to be the subspace of admissible maps in $Map(F,D)$. Then $\Gamma$ acts by conjugation on $Ass(\sigma, \mathcal{C})$, by condition ii) of Definition \ref{echp24}, and the action is $\Pi$-free since $\Pi$ acts effectively on $F$. If $(U, \psi) \in \mathcal{C}$, note that evaluation at $x$ helps define a map $Ass(\sigma, \mathcal{C}) \to B$ and the preimage of $U$ is $\Pi$-homeomorphic to $U \times \Pi \subset Map(F,U \times F)$.  For the point set topology here, note that, for any space $Z$, the functors $Z \times -$ and $Map(Z,-)$ preserve subspace inclusions. This follows from the universal properties of subspaces, products and the exponential law, \cite[Proposition 2.12]{S09}. Finally, observe that maps of $(\Pi; \Gamma)$-bundles with fibre $F$ define $\Gamma$-maps between associated principal bundles.
 	
 	Conversely, given a principal $(\Pi; \Gamma)$-bundle, $E$, define $\sigma$ to be the map $1 \times_\Pi *: E \times_\Pi F \to E \times_\Pi * := B$. Since $q: E \to E / \Pi$ is an open map and $E$ is a principal $(\Pi; \Gamma)$-bundle, $B$ can be covered by open sets $U$ for which there exists a $\Pi$-homeomorphism $\phi: U \times \Pi \to q^{-1}(U)$ over $U$, noting Remark \ref{echp63}. We define an atlas for $\sigma$ using the set, $\mathcal{D}$, of all such pairs $(U, \phi)$. Indeed, if $(U, \phi) \in \mathcal{D}$, then we can define a chart $(U, \psi)$ by letting $\psi$ equal:
 	
 	\[
 	U \times F \cong (U \times \Pi) \times_\Pi F  \xrightarrow{\phi \times_\Pi 1} q^{-1}(U) \times_\Pi F \cong \sigma^{-1}(U)
 	\]
 	
 	Let $\mathcal{C}$ be the set of all such charts induced by elements of $\mathcal{D}$. There is a homeomorphism $\Pi \times_\Pi F \to F$ given by $(a,x) \to ax$. If $\gamma \in \Gamma$, then under this homeomorphism multiplication by $\gamma$ on $F$ corresponds to:
 	
 	\[
 	\Pi \times_\Pi F \xrightarrow{()^{\gamma^{-1}} \times \gamma} \Pi \times_\Pi F 
 	\]
 	
 	where $a^{\gamma^{-1}} = \gamma a \gamma^{-1}$.

 	  Using these remarks, we want to show that $\mathcal{C}$ defines an atlas for $\sigma$ and condition i) of Definition \ref{echp24} is immediate. Let $(U_i, \phi_i), (U_j, \phi_j) \in \mathcal{D}$ and note that condition ii) of Definition \ref{echp24} reduces to showing that:
 	
 	\[
 F \cong	(\{u\} \times \Pi) \times_\Pi F  \xrightarrow{(\phi_j^{-1} \gamma \phi_i ()^{\gamma}) \times_\Pi 1} (\{gu\} \times \Pi) \times_\Pi F \cong F
 	\]
 	
 	is multiplication by some $\pi \in \Pi$. This follows from the observation that the composite $\phi_j^{-1} \gamma \phi_i ()^{\gamma}: \Pi \to \Pi$ is a $\Pi$-equivariant map and so must be left multiplication by some $\pi \in \Pi$. We define the $(\Pi; \Gamma)$-bundle with fibre $F$ associated to $E$ to be $\sigma$ equipped with the unique maximal atlas, $\mathcal{C}^{'}$, containing $\mathcal{C}$. Finally, the fact that a $\Gamma$-map between principal $(\Pi; \Gamma)$-bundles, $E_1 \to E_2$ induces a map of $(\Pi;\Gamma)$-bundles with fibre $F$, $E_1 \times_\Pi F \to E_2 \times_\Pi F$, follows from the $\Pi$-equivariance of the trivialisations of $E_1$ and $E_2$. 
 	
 	To show that we have defined an equivalence of categories, first note that if $E$ is a principal $(\Pi; \Gamma)$-bundle there is a natural map $E \to Ass(\sigma, \mathcal{C}^{'}) \subset Map(F, E \times_{\Pi} F)$ over $B$, defined by the adjoint of the quotient map $E \times F \to E \times_\Pi F$. The fact that this is a homeomorphism follows by considering its restriction over the open sets $U$ of $B$ corresponding to elements $(U, \psi) \in \mathcal{D}$.
 	
 	Conversely, given a $(\Pi; \Gamma)$-bundle with fibre $F$, $(\sigma: D \to B, \mathcal{C})$, we have a canonical map $Map(F,D) \times_{\Pi} F \to D$ which restricts to $\alpha: Ass(\sigma, \mathcal{C}) \times_{\Pi} F \to D$ over $B$. For the point set topology here, note that subspaces and quotient maps are preserved by pullbacks, \cite[Propositions 2.33, 2.36]{S09}. Again by restricting to charts in $\mathcal{C}$, it can be checked that the map $\alpha$ is a $G$-homeomorphism. In fact, we've seen each chart $(U, \psi) \in \mathcal{C}$ defines a trivialisation of $Ass(\sigma, \mathcal{C})$ which in turn defines a chart of $Ass(\sigma, \mathcal{C}) \times_\Pi F$, which maps under $\alpha$ to $\psi$. So $\alpha$ is an isomorphism of $(\Pi; \Gamma)$-bundles with fibre $F$, as desired.
 \end{proof}

We are particularly interested in the following consequence:

 \begin{lemma} \label{echp62}
 	A $(\Pi; \Gamma)$-bundle with fibre $F$, $(\sigma:D \to B, \mathcal{C})$, satisfies the ECHP iff its associated principal $(\Pi; \Gamma)$-bundle, $p: Ass(\sigma, \mathcal{C}) \to B$, is an $h\Gamma$-fibration.
 \end{lemma}
 
 \begin{proof}
 	We first show that $(\sigma, \mathcal{C})$ satisfies the ECHP iff for every principal $(\Pi; \Gamma)$-bundle, $X$, $p$ has the RLP with respect to the map $X \to X \times I$. 
 
 For this, note that every principal $(\Pi; \Gamma)$-bundle is $\Gamma$-homeomorphic to a principal $(\Pi; \Gamma)$-bundle of the form $Ass(\sigma_1, \mathcal{C}_1)$, by Theorem \ref{echp61}. Therefore, $p$ has the RLP wrt all maps of the form $X \to X \times I$, with $X$ a principal $(\Pi; \Gamma)$-bundle, iff $p$ has the RLP wrt all maps of the form $Ass(\sigma_1, \mathcal{C}_1) \to Ass(\sigma_1, \mathcal{C}_1) \times I$, with $(\sigma_1: D_1  \to B_1, \mathcal{C}_1)$ a $(\Pi; \Gamma)$-bundle with fibre $F$. Consider a lifting problem:
 
  \[
 \begin{tikzcd}
 	Ass(\sigma_1, \mathcal{C}_1) \arrow{r}{\alpha} \arrow[swap]{d}{} & Ass(\sigma, \mathcal{C}) \arrow{d}{p} \\
 	Ass(\sigma_1, \mathcal{C}_1) \times I   \arrow[swap]{r}{L} & B
 \end{tikzcd}
 \]
 
 By Theorem \ref{echp61}, $\alpha$ is equivalent to a map of bundles $f: (\sigma_1, \mathcal{C}_1) \to (\sigma, \mathcal{C})$. Since $\Pi$ acts trivially on $B$ and $B_1 = Ass(\sigma_1, \mathcal{C}_1) / \Pi$, $L$ is equivalent to a homotopy $H: B_1 \times I \to B$ starting at $h:= \alpha / \Pi$. Since the associated principal bundle of $(\sigma_1 \times 1, \mathcal{C} \times 1)$ is $Ass(\sigma_1, \mathcal{C}_1) \times I$, a lift $\tilde{L}: Ass(\sigma_1, \mathcal{C}_1) \times I \to Ass(\sigma, \mathcal{C})$ is equivalent to a map of $(\Pi; \Gamma)$-bundles with fibre $F$, $(\sigma_1 \times 1, \mathcal{C}_1 \times 1) \to (\sigma, \mathcal{C})$ which restricts to $f$ at $0 \in I$. By comparison with Definition \ref{echp20}, we reach the desired conclusion.
 
 Finally, we need to show that $p$ is an $h \Gamma$-fibration iff for every principal $(\Pi; \Gamma)$-bundle, $X$, $p$ has the RLP with respect to the map $X \to X \times I$. One direction is clear. The other follows from the observation that if $Y \to Ass(\sigma, \mathcal{C})$ is an arbitrary map of $\Gamma$-spaces, then $Y$ is also a principal $(\Pi; \Gamma)$-bundle, by Definition \ref{echp21} and Lemma \ref{echp2}. 
 \end{proof}
 
Given Lemma \ref{echp62} it makes sense to define the ECHP for principal $(\Pi; \Gamma)$-bundles as follows:

\begin{definition} \label{echp35}
	We say that a principal $(\Pi;\Gamma)$-bundle, $p: E \to B$, satisfies the ECHP if it is an h$\Gamma$-fibration.
\end{definition}

With these definitions in mind, the main result proved in this paper is the following equivariant covering homotopy property for numerable principal $(\Pi; \Gamma)$-bundles, \cite[Corollary 8]{LM86}, from which everything else follows:

\begin{theorem} \label{echp8}
If $p: E \to B$ is a numerable principal $(\Pi; \Gamma)$-bundle, see Definition \ref{echp31}, then $p$ is an $h\Gamma$-fibration. Here, $\Gamma$ acts on $B$ via the quotient map $\Gamma \to G$.	
\end{theorem}

We deduce that numerable $(\Pi; \Gamma)$-bundles with fibre $F$ are Hurewicz $G$-fibrations in Lemma \ref{echp6}, via the more general result of Lemma \ref{echp5}, and we deduce the ECHP for $(\Pi; \Gamma)$-bundles with fibre $F$ from Theorem \ref{echp8} in Theorem \ref{echp7}.

\begin{remark} \label{echp66}
	The proof of the ECHP in \cite[Corollary 8]{LM86} reduces to (or uses) the case of the less general $G$-$A$ bundles. In particular, the proof uses the fact that the $H$-map $q: G \to G/H$, where $H$ acts by conjugation on $G$, has the RLP with respect to the $H$-maps $X \times \{0\} \to X \times I$, where $X$ is a paracompact $H$-space, \cite[pg. 266]{L82}.  This follows from a paracompact case discussed at the top of \cite[pg.265]{L82}, which had already been proved by Bierstone. See also Lemma \ref{echp6} below. Given this result on the map $q:G \to G/H$, Lashof is able to prove an interesting result on slices homeomorphic to $X \times I$, \cite[Lemma 2.8]{L82}, which he uses to prove the ECHP in the non-paracompact case.
\end{remark}

\section{Proof of the ECHP of Numerable Bundles}

We begin with some routine lemmas that we will use throughout the paper:

	\begin{lemma} \label{echp1}
		Suppose that we have a pullback square of $G$-spaces, where $G$ is a compact Hausdorff group:
		
		\[
		\begin{tikzcd}
			P \arrow{r}{\alpha} \arrow[swap]{d}{\beta} & A \arrow{d}{f} \\
			B \arrow[swap]{r}{\phi} & C \\
		\end{tikzcd}
		\]
		
		Then if for all $G$-orbits $R$ of $A$, $f|_R : R \to C$ is injective, the following square is also a pullback:
		
		\[
		\begin{tikzcd}
			P/G \arrow{r}{\alpha} \arrow[swap]{d}{\beta} & A/G \arrow{d}{f} \\
			B/G \arrow[swap]{r}{\phi} & C/G \\
		\end{tikzcd}
		\]
		
	\end{lemma}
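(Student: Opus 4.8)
The plan is to exhibit a continuous bijection
$$\Theta\colon P/G \to (B/G)\times_{C/G}(A/G), \qquad [(b,a)] \mapsto ([b],[a]),$$
and then to promote it to a homeomorphism. Here I identify $P$ with the honest pullback $\{(b,a)\in B\times A : \phi(b)=f(a)\}$, with $\alpha,\beta$ the projections. That $\Theta$ is well defined and continuous is immediate, since $\alpha,\beta,f,\phi$ are $G$-maps, so the map $\rho := \pi_B\times\pi_A$ restricts to $P$ with image in the fibre product and is constant on $G$-orbits. The two nontrivial inputs, one set-theoretic and one topological, are exactly the two hypotheses of the lemma.

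First I would check that $\Theta$ is a bijection. Surjectivity needs no hypothesis: given $([b],[a])$ with $[\phi(b)]=[f(a)]$ in $C/G$, pick $g$ with $\phi(b)=g f(a)=f(ga)$, so that $(b,ga)\in P$ represents a preimage. Injectivity is where the orbit-injectivity of $f$ enters. Suppose $[(b_1,a_1)]$ and $[(b_2,a_2)]$ have the same image, so $b_2=g_1 b_1$ and $a_2 = g_2 a_1$ for some $g_1,g_2\in G$. Applying $\phi(b_i)=f(a_i)$ and equivariance to the relation $\phi(b_2)=f(a_2)$ gives $f(g_1 a_1)=f(g_2 a_1)$; since $g_1 a_1$ and $g_2 a_1$ lie in a single orbit of $A$ and $f$ is injective on orbits, $g_1 a_1 = g_2 a_1$. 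Hence $g_1$ simultaneously carries $(b_1,a_1)$ to $(b_2,a_2)$, so the two classes agree.

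For the topological step I would use the standard fact that for a compact group the orbit projection $\pi_X\colon X\to X/G$ is a closed map: the saturation $GF$ of a closed set $F$ equals $\mathrm{pr}_X\big(\{(g,x):g^{-1}x\in F\}\big)$, and projection off the compact factor $G$ is closed. Applying this to the compact group $G\times G$ acting on $B\times A$ shows that $\rho\colon B\times A\to (B/G)\times(A/G)$, which is precisely the orbit map for $G\times G$, is closed. Assuming as usual that $C$ is Hausdorff, $P$ is the preimage of the diagonal under $\phi\times f$ and hence closed in $B\times A$, so $\rho|_P$ is again closed, now with image in the fibre product. Given a closed $F\subseteq P/G$, its saturation $\pi_P^{-1}(F)$ is closed in $P$ and $\Theta(F)=\rho\big(\pi_P^{-1}(F)\big)$, which is therefore closed in the subspace. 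A continuous closed bijection is a homeomorphism, completing the identification of $P/G$ with $(B/G)\times_{C/G}(A/G)$.

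I expect the main obstacle to be this topological upgrade rather than the bijection: one must promote a continuous bijection of quotient spaces to a homeomorphism, and this is precisely where compactness of $G$ is indispensable (the statement fails for general $G$, where orbit maps need not be closed). The orbit-injectivity hypothesis, by contrast, is used only to force the two group elements $g_1,g_2$ above to agree, which is exactly what guarantees injectivity of $\Theta$.
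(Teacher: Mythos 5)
Your proof is correct. Its set-theoretic half coincides with the paper's: the same canonical comparison map, the same surjectivity argument, and the same use of orbit-injectivity of $f$ (your step forcing $g_1a_1=g_2a_1$ is exactly the paper's ``$f(ga)=f(x)$, hence $ga=x$''). The genuine difference is in the topological upgrade. The paper works in the category of CGWH spaces and verifies that the continuous bijection $\sigma\colon P/G\to Q$ has compact preimages of compact Hausdorff subsets $K\subseteq Q$: the preimage of $K$ in $P$ is exhibited as a closed subspace of $(\pi\times\pi)^{-1}\bigl(\tilde{\alpha}(K)\times\tilde{\beta}(K)\bigr)$, whose compactness is a properness property of orbit maps of compact group actions, and the CGWH characterisation of homeomorphisms then finishes. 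You instead show the bijection is a \emph{closed} map, using the Kuratowski fact that projection along a compact factor is closed (hence orbit maps of compact groups are closed), together with closedness of $P$ in $B\times A$. The two mechanisms are close kin --- a closed map with compact fibres is proper, so your closedness fact is precisely what underwrites the compactness the paper asserts but does not justify in detail. Your explicit assumption that $C$ is Hausdorff (needed so that $P=(\phi\times f)^{-1}(\Delta_C)$ is closed) is not really an extra cost: the paper's ``closed subspace'' claim also implicitly requires $P$ closed in $A\times B$, and in the ambient CGWH category both are free, since a compactly generated space is weak Hausdorff exactly when its diagonal is closed in the compactly generated product. What your route buys is a fully classical argument, valid for arbitrary topological $G$-spaces with $C$ Hausdorff, with every point-set ingredient made explicit; what the paper's route buys is brevity inside CGWH, at the price of leaving the properness of orbit maps and the closedness of $P$ to the reader. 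One small point to attend to if you polish this for the paper's setting: there the product $B\times A$ and the identification of $\rho$ with the orbit map of $G\times G$ involve $k$-ified topologies, but this is covered by the standard facts that orbit maps are open and that products of quotient maps are again quotient maps in CGWH.
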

	
	\begin{proof}
		Let $Q$ be the actual pullback and $\sigma: P / G \to Q$ the induced map. Any element of $Q$ is of the form $([a],[b])$, where $[f(a)] = [\phi(b)]$. In particular, there exists a $g$ such that $f(ga) = \phi(b)$ and so $\sigma$ is surjective. Suppose that $(a,b)$ and $(x,y)$ in $P$ are both sent to the same element of $Q$. There exists $g$ such that $gb = y$ and so $f(ga) = f(x)$. Since $f$ is injective on orbits, $(ga,gb) = (x,y)$. So we have a continuous bijection $\sigma: P/G \to Q$. Since we are working in the category of CGWH spaces, $\sigma$ will be a homeomorphism if whenever $K \subset Q$ is compact Hausdorff, $\sigma^{-1}(K)$ is compact. It suffices to show that the preimage of $K$ in $P$ is compact. This is a closed subspace of the preimage under $\pi \times \pi$ of $\tilde{\alpha}(K) \times \tilde{\beta}(K) \subset A / G \times B / G$ in $A \times B$, where $\tilde{\alpha}$ and $\tilde{\beta}$ are the relevant maps from $Q$, so is compact as desired.
	\end{proof}
	
	The next result goes back to at least \cite[Theorem 3.7]{P60}, where it was stated in the context of slices:
	
	\begin{lemma} \label{echp2}
		Let $S$ be an $H$-space. If $f:X \to G \times_H S$ is a $G$-map, then the canonical map $\sigma: G \times_H f^{-1}(S) \to X$ is a homeomorphism. \end{lemma}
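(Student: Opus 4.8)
The plan is to recognise $f^{-1}(S)$ as a fibre of a $G$-map to $G/H$ and then reuse the compactness criterion from the proof of Lemma \ref{echp1}. Write $p\colon G\times_H S \to G/H$ for the projection $[g,s]\mapsto gH$ and set $q = p\circ f\colon X \to G/H$, a $G$-map. Under the usual inclusion $S \hookrightarrow G\times_H S$, $s\mapsto [e,s]$, the subspace $S$ is exactly $p^{-1}(eH) = \{[e,s] : s\in S\}$, so $f^{-1}(S) = q^{-1}(eH)$. Since $G/H$ is Hausdorff, $\{eH\}$ is closed, hence $f^{-1}(S)$ is a closed $H$-invariant subspace of $X$; thus $G\times_H f^{-1}(S)$ is defined and the canonical map is the $G$-map $\sigma([g,x]) = gx$.

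First I would check that $\sigma$ is a continuous bijection. For surjectivity, given $x\in X$ pick $g$ with $q(x) = gH$; then $q(g^{-1}x) = eH$, so $g^{-1}x \in f^{-1}(S)$ and $\sigma([g,g^{-1}x]) = x$. For injectivity, suppose $g_1 x_1 = g_2 x_2$ with $x_1,x_2 \in f^{-1}(S)$. Applying $q$ gives $g_1 H = g_2 H$, say $g_2 = g_1 h$ with $h\in H$; then $g_1 x_1 = g_1 h x_2$, so $x_1 = h x_2$, and the balancing relation $[g_1 h, x_2] = [g_1, h x_2]$ gives $[g_2,x_2] = [g_1,x_1]$.

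To upgrade this to a homeomorphism I would argue exactly as in Lemma \ref{echp1}: in CGWH spaces it suffices to show $\sigma^{-1}(K)$ is compact for every compact Hausdorff $K\subseteq X$. Here the compactness of $G$ does the work. Write $m\colon G\times f^{-1}(S)\to X$, $m(g,x)=gx$, and $\rho\colon G\times f^{-1}(S)\to G\times_H f^{-1}(S)$ for the quotient, so $m = \sigma\circ\rho$. If $gx\in K$ with $x\in f^{-1}(S)$, then $x = g^{-1}(gx)\in GK$, where $GK$ is the image of the compact space $G\times K$ under the action map, hence compact Hausdorff. Therefore $x$ lies in the compact set $L = GK\cap f^{-1}(S)$ (a closed subset of $GK$), so $m^{-1}(K)$ is a closed subset of the compact space $G\times L$ and is itself compact. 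Since $\rho$ is surjective, $\sigma^{-1}(K) = \rho(\rho^{-1}(\sigma^{-1}(K))) = \rho(m^{-1}(K))$ is compact, as required.

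The main obstacle is precisely this last step, the continuity of the inverse; everything preceding it is formal. I note that one could instead construct a continuous inverse by hand from local sections of the principal $H$-bundle $G\to G/H$, which exist because $G$ is a compact Lie group, but the compactness argument above has the advantage of matching the CGWH framework already deployed in Lemma \ref{echp1}.
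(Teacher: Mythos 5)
Your proof is correct and takes essentially the same route as the paper: establish that $\sigma$ is a continuous bijection (surjectivity because every orbit meets $f^{-1}(S)$, injectivity by pushing forward to the base), then verify the CGWH criterion by showing $\sigma^{-1}(K)$ is compact via the compactness of $GK \cap f^{-1}(S)$. Your detour through $q = p\circ f\colon X \to G/H$ and the explicit factorisation $m = \sigma\circ\rho$ simply spell out details (closedness of $f^{-1}(S)$, why $\sigma^{-1}(K)$ inherits compactness) that the paper's proof leaves implicit.
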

	
	\begin{proof}
		Since every orbit of $G \times_H S$ contains an element of $S$, $\sigma$ is surjective. If $g_1 x_1 = g_2 x_2$ with $g_i \in G$ and $x_i \in f^{-1}(S)$, then $g_1 f(x_1) = g_2 f(x_2)$, so $g_1 = g_2 h$ for some $h \in H$ and so $\sigma$ is bijective. If $K \subset X$ is compact Hausdorff then so is $GK \subset X$, and since $f^{-1}(S)$ is closed, $f^{-1}(S) \cap GK$ is compact. So $\sigma^{-1}(K)$ is a closed subspace of a compact space, hence compact. Therefore, $\sigma$ is a homeomorphism.
	\end{proof}
	
	Recall that an open cover $\{U_i\}$ of a $G$-space, $B$, is said to be $G$-numerable if it is locally finite  and, for all $i$, there exists a $G$-equivariant map $\lambda_i: B \to I$ such that $U_i = \lambda^{-1}((0,1])$, where $G$ acts trivially on $I$. We will use the following familiar result, due to Dold, \cite[Theorem 4.8]{D63}, about Hurewicz $G$-fibrations ($:= hG$-fibrations), whose proof is identical to the non-equivariant case, see also \cite[pg.51]{M99}:
	
	\begin{lemma} \label{echp9} Let $p: E \to B$ be a map of $G$-spaces and let $\{U_i\}$ be a $G$-numerable open cover of $B$. Then $p$ is an $hG$-fibration iff $p: p^{-1}(U_i) \to U_i$ is an $hG$-fibration for every $i$.
	\end{lemma}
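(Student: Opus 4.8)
The plan is to prove the two implications separately. The ``only if'' direction is immediate: the inclusion $U_i \hookrightarrow B$ of an open $G$-subspace exhibits $p \colon p^{-1}(U_i) \to U_i$ as a pullback of $p$, and any lifting problem for the restriction is in particular a lifting problem for $p$; a solution covers the homotopy $X \times I \to U_i \hookrightarrow B$, so it automatically takes values in $p^{-1}(U_i)$. Hence each restriction inherits the $hG$-fibration property, and all the work is in the converse.

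For the converse I would first record the standard reformulation of the $hG$-fibration property in terms of a lifting function. Let $Np = \{(e,\beta) \in E \times B^I : p(e) = \beta(0)\}$, a $G$-space under the diagonal action with $G$ acting trivially on the parameter interval. The tautological lifting problem with total map $(e,\beta) \mapsto e$ and base homotopy $(e,\beta,t) \mapsto \beta(t)$ is universal, so by the exponential law in CGWH, $p$ is an $hG$-fibration if and only if it admits a $G$-equivariant lifting function, that is, a $G$-map $s \colon Np \to E^I$ with $s(e,\beta)(0) = e$ and $p \circ s(e,\beta) = \beta$. Applying this equivalence to each restriction, the hypothesis supplies for every $i$ a $G$-equivariant local lifting function $s_i$ over $U_i$.

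It then remains to assemble the $s_i$ into a global $G$-equivariant lifting function, and here I would follow the classical patching argument, checking that every construction respects the $G$-action. First pass to a locally finite $G$-invariant partition of unity $\{\lambda_j\}$ subordinate to the cover, which exists by the usual construction since $G$ acts trivially on $I$. Then apply Dold's grouping: for each finite nonempty index set $S$ put $V_S = \{b : \min_{j \in S} \lambda_j(b) > \sup_{k \notin S} \lambda_k(b)\}$ and $V_n = \bigcup_{|S| = n} V_S$. Because the $\lambda_j$ are $G$-invariant, each $V_S$ is an open $G$-subspace contained in every $U_j$ with $j \in S$, the sets with $|S| = n$ are pairwise disjoint, and $\{V_n\}_{n \geq 1}$ is a countable $G$-numerable cover. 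Since a path is connected, a homotopy $X \times I \to V_n$ decomposes $X$ into open-closed $G$-subspaces according to which $V_S$ a slice lands in, so a lifting problem over $V_n$ splits into problems over the $V_S$; as each $V_S$ lies in some $U_j$, restricting $s_j$ solves these, and $p$ is an $hG$-fibration over each $V_n$. Finally, using an invariant partition of unity subordinate to $\{V_n\}$ one subdivides the parameter interval of each path into finitely many consecutive segments lying in single $V_n$'s, with endpoints depending continuously on $(e,\beta)$, and lifts segment by segment; this is exactly the argument of \cite[p.~51]{M99}.

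The main obstacle is the usual one: verifying that the patched lifting function is continuous, i.e. that the subdivision points vary continuously with the path and that the successively chosen lifts glue. This is precisely the classical content I would import from the non-equivariant case. Equivariance, by contrast, comes for free: since $G$ acts trivially on $I$ and each $\lambda_j$ is $G$-invariant we have $\lambda_j((g\beta)(t)) = \lambda_j(\beta(t))$, so the subdivision of $g \cdot (e,\beta)$ coincides with that of $(e,\beta)$, while every local lifting function is a $G$-map; hence the patched lifting function is a $G$-map and $p$ is an $hG$-fibration.
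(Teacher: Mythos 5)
Your proposal is correct and takes essentially the same route as the paper: the paper gives no written proof, simply asserting that the argument is identical to the non-equivariant case of \cite[pg.~51]{M99}, and your write-up is precisely that adaptation (lifting functions, an invariant partition of unity, Dold's grouping, and segment-by-segment lifting). Your key observation that equivariance comes for free because $G$ acts trivially on $I$ and the $\lambda_j$ are $G$-invariant is exactly the point implicit in the paper's claim that the classical proof carries over verbatim.
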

	
	Finally, we have the following local description of completely regular $G$-spaces:
	
	\begin{lemma} \label{echp33}
		Let $X$ be a completely regular $G$-space. Then, for any $x \in X$ with isotropy group $G_x$, there is a $G_x$-invariant subspace $V_x$ containing $x$, called a slice through $x$, such that the canonical map $G \times_{G_x} V_x \to X$ is a homeomorphism onto an open neighbourhood of the orbit $Gx$.
	\end{lemma}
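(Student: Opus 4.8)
The plan is to reduce the entire statement to the construction of a single $G$-equivariant map and then to invoke Lemma \ref{echp2}. Writing $H = G_x$, I claim it suffices to produce an open $G$-invariant neighbourhood $U$ of the orbit $Gx$ together with a $G$-map $f : U \to G/H$ with $f(x) = eH$. Granting such an $f$, set $V_x = f^{-1}(eH)$. Since $eH$ is fixed by $H$ and $f$ is equivariant, $V_x$ is an $H$-invariant subspace containing $x$. Regarding $G/H = G \times_H \{*\}$ for the one-point $H$-space $\{*\}$, Lemma \ref{echp2} applied to $f : U \to G \times_H \{*\}$ shows that the canonical map $G \times_H V_x = G \times_H f^{-1}(\{*\}) \to U$ is a homeomorphism; as $U$ is open and contains $Gx$, this is exactly the desired conclusion.

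To build $f$ I would first model the orbit linearly. Since $G$ is a compact Lie group and $H$ is closed, $G/H$ is a compact smooth manifold and $Gx \cong G/H$. By the Peter--Weyl theorem there is a finite-dimensional orthogonal representation $W$ of $G$, which I may take large enough (a sum of several representations) that $G/H$ admits a $G$-equivariant embedding into $W$; equivalently, $W$ contains a point whose isotropy group is exactly $H$. This guarantees that the target is rich enough for an equivariant map to pin the isotropy of a single point down to $H$.

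The heart of the argument is then to manufacture a $G$-equivariant map $\Phi$ from a neighbourhood of $Gx$ into $W$ with $G_{\Phi(x)} = H$. I would start from a (non-equivariant) continuous map $\theta$ defined near $x$ with values in $W$, obtained from complete regularity, and symmetrise it by integrating against normalised Haar measure, $\Phi(y) = \int_G g^{-1}\,\theta(gy)\,dg$; this is automatically $G$-equivariant, and equivariance forces $\Phi(x)$ into the fixed space $W^{H}$, so that $H \subseteq G_{\Phi(x)}$. The main obstacle is the reverse inclusion: one must choose $\theta$, enlarging $W$ if necessary, so that $\Phi$ restricts to an injection on the orbit, equivalently so that $G_{\Phi(x)} = H$. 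This is precisely where the smooth structure on $G/H$ and the separation of its points by matrix coefficients (Peter--Weyl), combined with complete regularity of $X$, are indispensable; it is the classical Gleason averaging step and is the only genuinely delicate part of the proof. Setting $w = \Phi(x)$, we then have $G_w = H$ and an identification $Gw \cong G/H$ via $gw \mapsto gH$.

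Finally, I would assemble the pieces. The orbit $Gw$ is a compact $G$-submanifold of $W$, and since the inner product may be taken $G$-invariant it admits a $G$-invariant tubular neighbourhood $T$, with nearest-point projection furnishing a $G$-equivariant retraction $r : T \to Gw \cong G/H$ satisfying $r(w) = eH$. Shrinking to the $G$-invariant open set $U = \Phi^{-1}(T)$, which contains $Gx$ because $\Phi(x) = w \in Gw \subseteq T$, the composite $f = r \circ \Phi : U \to G/H$ is a $G$-map with $f(x) = r(w) = eH$. This is the map required in the first paragraph, and Lemma \ref{echp2} then delivers the slice $V_x = f^{-1}(eH)$ together with the homeomorphism $G \times_H V_x \to U$ onto the open neighbourhood $U$ of $Gx$. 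Beyond the Gleason averaging step, the remaining verifications (openness of $T$ and of $U$, and continuity and equivariance of $r$ and $\Phi$) are routine.
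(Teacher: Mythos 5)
The paper does not actually prove this lemma --- its ``proof'' is the citation \cite[Theorem 5.4]{B72} --- so your attempt is really being measured against Bredon's classical argument, and your skeleton is that argument. The reduction in your first paragraph is correct and fits the paper's toolkit nicely: producing a $G$-map $f : U \to G/H$ with $f(x) = eH$ on an invariant open neighbourhood $U$ of the orbit and then applying Lemma \ref{echp2} to get $G \times_H f^{-1}(eH) \cong U$ is exactly how slices are characterised (the paper itself notes that Lemma \ref{echp2} goes back to Palais in the context of slices). The equivariant embedding of $G/H$ into an orthogonal representation $W$ and the invariant tubular neighbourhood retraction $r : T \to Gw$ are likewise standard and unobjectionable.

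The genuine gap is the middle step, which you flag as ``the only genuinely delicate part'' and then do not carry out --- and, as you have set it up, it would not go through. If $\theta$ is an arbitrary map near $x$ supplied by complete regularity, then the average $\Phi(y) = \int_G g^{-1}\theta(gy)\,dg$ only yields $H \subseteq G_{\Phi(x)}$, and no amount of ``choosing $\theta$ cleverly'' via matrix coefficients is the standard fix; note also that this integral requires $\theta$ to be defined on a $G$-invariant set, not merely near $x$. The classical resolution reorganises the construction so that there is no delicacy at all: first define an equivariant map on the orbit itself, $f_0 : Gx \to W$ by $gx \mapsto gw$, which is well defined and continuous precisely because $G_x = H = G_w$; then extend $f_0$ non-equivariantly to $\theta : X \to W$ --- this is exactly where complete regularity enters, since the compact orbit $Gx$ is $C^*$-embedded in the Tychonoff space $X$ (extend over $\beta X$ by Tietze and restrict) and $f_0$ is bounded; finally average. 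Because $f_0$ is already equivariant, the averaged map $\Phi$ agrees with $f_0$ on $Gx$, so $\Phi(x) = w$ exactly and $G_{\Phi(x)} = G_w = H$ with no further argument. With that repair your final paragraph goes through verbatim. In short: right architecture, but the step you deferred is the substance of the theorem, and the way you proposed to attack it (averaging an unconstrained $\theta$ and hoping to control the isotropy of the result) hides a real, though fixable, error.
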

	
	\begin{proof}
		See \cite[Theorem 5.4]{B72}.
	\end{proof}

We now move on to the proof of the equivariant covering homotopy property. Firstly, recall the definition of a numerable principal $(\Pi; \Gamma)$-bundle:
	
	\begin{definition} \label{echp31}
		A numerable principal $(\Pi; \Gamma)$-bundle is a $\Pi$-free $\Gamma$-space, $E$, such that there exists a $\Gamma$-numerable open cover $\{U_i\}$ of $E$ with $U_i \cong \Gamma \times_{\Lambda_i} S_i$ as $\Gamma$-spaces, where $\Lambda_i \cap \Pi = 1$ and $S_i$ is a $\Lambda_i$-space. Let $B = E / \Pi$ and let $p: E \to B$ be the quotient map.
	\end{definition}

Note that a numerable principal $(\Pi; \Gamma)$-bundle is a principal $(\Pi; \Gamma)$-bundle since if $\Lambda \cap \Pi = 1$, then $\Gamma$ is a completely regular and free $(\Pi \times \Lambda)$-space with action $(\pi, \lambda) \cdot \gamma = \pi \gamma \lambda^{-1}$. 
	
	\begin{remark} \label{echp32}
		If $E$ is a $\Pi$-free $\Gamma$-space which is also completely regular then there is a $\Gamma$-equivariant open cover, $\{U_i\}$, of $E$ with $U_i \cong \Gamma \times_{\Lambda_i} S_i$ by Lemma \ref{echp33}, with $\Lambda_i \cap \Pi = \{e\}$ since $E$ is $\Pi$-free. If $E$ is also paracompact Hausdorff, then there is a $\Gamma$-numerable refinement of this open cover, say $\{V_i\}$, and each $V_i$ is also of the form $\Gamma \times_{\Lambda_i} T_i$ by Lemma \ref{echp2}. Here we are using results on partitions of unity such as \cite[Theorem 4.85]{L10}, \cite[Lemma 1.12]{L82} and the observation that an equivariant partition of unity, as in \cite[Lemma 1.12]{L82}, defines a $\Gamma$-numerable open cover as defined before Lemma \ref{echp9}. So $E$ is a numerable principal $(\Pi; \Gamma)$-bundle.
	\end{remark}

	\begin{lemma} \label{echp3}
		If $\Lambda$ is a subgroup of $\Gamma$ such that $\Lambda \cap \Pi = 1$, then $\Gamma/ \Lambda \to G / \Lambda$ is an $h\Lambda$-fibration (that is a Hurewicz $\Lambda$-fibration with the action of $\Lambda$ on the left).
	\end{lemma}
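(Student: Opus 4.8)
The plan is to exhibit $p : \Gamma/\Lambda \to G/\Lambda$ as a $\Lambda$-equivariantly locally trivial map over a $\Lambda$-numerable cover and then appeal to the local patching property of Lemma \ref{echp9}. First I would record the global structure. Left multiplication by $\Pi$ gives a \emph{free} action on $\Gamma/\Lambda$ (if $\pi\gamma\Lambda = \gamma\Lambda$ then $\gamma^{-1}\pi\gamma \in \Pi \cap \Lambda = 1$), and since $\Pi$ is normal its orbit space is $\Pi\backslash\Gamma/\Lambda = G/\Lambda$. Thus $p$ is the quotient by a free $\Pi$-action, so it is a principal $\Pi$-bundle with fibre homeomorphic to $\Pi$, and it is left-$\Lambda$-equivariant for the action in question.

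Next I would localise on the base. Since $G/\Lambda$ is a smooth compact manifold carrying a smooth, hence completely regular, left $\Lambda$-action, the slice theorem (the final lemma above) covers it by $\Lambda$-invariant open sets of the form $\Lambda \times_{K} V$, where $K = \Lambda_{b_0}$ is the isotropy group of a point $b_0$ and $V$ is a linear slice; averaging an ordinary partition of unity against the Haar measure on $\Lambda$ shows this cover may be taken $\Lambda$-numerable. By Lemma \ref{echp9} it then suffices to prove that each $p : p^{-1}(\Lambda \times_K V) \to \Lambda \times_K V$ is an $h\Lambda$-fibration. Applying Lemma \ref{echp2} to the $\Lambda$-map $p$ identifies $p^{-1}(\Lambda \times_K V) \cong \Lambda \times_K p^{-1}(V)$ over $\Lambda \times_K V$, so over this piece $p$ is induced up from the $K$-map $p : p^{-1}(V) \to V$. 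Since induction $\Lambda \times_K (-)$ carries $hK$-fibrations to $h\Lambda$-fibrations, a fact that in turn follows from Lemma \ref{echp2} by using it to convert a $\Lambda$-lifting problem over $\Lambda \times_K V$ into a $K$-lifting problem over $V$, the proof reduces to the local statement that $p : p^{-1}(V) \to V$ is an $hK$-fibration.

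This local statement is the base case and the crux of the argument. Here $p^{-1}(V) \to V$ is a $K$-equivariant principal $\Pi$-bundle over the linear slice $V$, on which $K$ fixes $b_0$ and acts on the fibre $p^{-1}(b_0) \cong \Pi$ by a twist of the form $\pi \mapsto (k\pi k^{-1})\,\pi_0(k)$. Because $V$ is a representation disc it is $K$-equivariantly contractible to $b_0$, and I would use this to trivialise the bundle $K$-equivariantly as $V \times \Pi$, with $K$ acting diagonally, on $V$ through the slice representation and on $\Pi$ through the twist above. Concretely this trivialisation can be produced by a $K$-equivariant local section, obtained by lifting a slice of $G \to G/\Lambda$ through $q : \Gamma \to G$ over the contractible $V$ and averaging, or by invoking equivariant homotopy invariance of principal bundles over the $K$-CW slice. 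Once the bundle is a product, $p$ is the projection $V \times \Pi \to V$, whose lifting function $(\omega,(x,\pi)) \mapsto \bigl(t \mapsto (\omega(t),\pi)\bigr)$ is manifestly $K$-equivariant, so $p$ is an $hK$-fibration, completing the reduction.

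The main obstacle is exactly this last equivariant trivialisation over the slice: one must produce it compatibly with the possibly non-trivial $K$-action on the fibre $\Pi$, and it is here, rather than in the formal reductions, that the geometry of compact Lie groups (linearity of slices) is genuinely used. An attractive alternative that bypasses the slice bookkeeping altogether is to put a bi-invariant metric on $\Gamma$, observe that $p$ becomes a Riemannian submersion with a $\Lambda$-invariant horizontal distribution, and take horizontal lifts of paths; continuous dependence of these lifts on the path and the initial point then yields a $\Lambda$-equivariant lifting function globally and directly, with the verification of continuity in the compactly generated topology being the price one pays.
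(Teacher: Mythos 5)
Your formal reductions at the start (free $\Pi$-action, slice cover of $G/\Lambda$, $\Lambda$-numerability, Lemma \ref{echp9}) are fine and match the opening moves of the paper's proof, but both load-bearing steps after that have genuine gaps. First, your claim that induction $\Lambda \times_K (-)$ carries $hK$-fibrations to $h\Lambda$-fibrations ``by Lemma \ref{echp2}'' does not work as stated. Lemma \ref{echp2} applied to the top map $f: X \to \Lambda \times_K p^{-1}(V)$ does give $X \cong \Lambda \times_K f^{-1}(p^{-1}(V))$, so a $\Lambda$-equivariant lift is determined by its $K$-equivariant restriction to $f^{-1}(p^{-1}(V)) \times I$; but the homotopy $X \times I \to \Lambda \times_K V$ restricted to that subspace takes values in all of $\Lambda \times_K V$, not in $V$, since a $\Lambda$-homotopy is free to flow points of the slice around the orbit. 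What you get is therefore a $K$-equivariant lifting problem against the \emph{full induced map} $\Lambda \times_K p^{-1}(V) \to \Lambda \times_K V$, not against $p^{-1}(V) \to V$, and the reduction is circular. (The preservation statement is in fact true, but proving it needs genuinely more, e.g.\ splicing an equivariant lifting function for $\Lambda \to \Lambda/K$ with one for $p^{-1}(V) \to V$.) The paper avoids this exact trap with Lemma \ref{echp1}: the local piece is exhibited as a \emph{pullback} of the orbit map $\Pi\Lambda \times_H * \to \Lambda \times_H *$, and when source and target are single orbits the Lemma \ref{echp2} reduction is valid, because the map being lifted against stays the same and only the equivariance group shrinks from $\Pi\Lambda$ to $H$; pullbacks of equivariant fibrations are then again equivariant fibrations.

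Second, your ``local statement'' --- the $K$-equivariant trivialisation of $p^{-1}(V) \to V$ --- is asserted rather than proved, and you correctly identify it as the crux, but neither route you offer closes it: one cannot ``average'' sections of a principal $\Pi$-bundle ($\Pi$ has no convex structure), and equivariant homotopy invariance of equivariant principal bundles is a theorem of the same depth as the ECHP itself (in the $(\Pi;\Gamma)$ setting it is standardly \emph{deduced from} the covering homotopy theorem this paper is proving), so invoking it is circular. The statement can be rescued by taking slices \emph{upstairs} rather than on the base: a slice $S$ through a point of $\Gamma/\Lambda$ for the $\Pi\Lambda$-action has isotropy $H$ with $H \cap \Pi = 1$, and such an $S$ maps homeomorphically onto a slice in $G/\Lambda$, which is precisely the twisted $K$-equivariant section you want --- but this is already the paper's key manoeuvre, and the paper then does not trivialise at all. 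Instead it runs an induction on the dimension and number of components of $\Lambda$: the orbit map $\Pi\Lambda \times_H * \to \Lambda \times_H *$ is an instance of the same lemma for the strictly smaller subgroup $H$ (or has a point as target), so the hard local input your approach needs is replaced by the inductive hypothesis. This induction is the idea missing from your proposal. Finally, the Riemannian-submersion alternative fails for a more basic reason: horizontal lifts exist only for suitably differentiable paths, whereas a Hurewicz $\Lambda$-fibration must lift arbitrary continuous homotopies $X \times I \to G/\Lambda$; no care with compactly generated topologies repairs that.
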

	
	\begin{proof}
		We will induct on the dimension/no. of connected components of $\Lambda$, with the usual ordering where dimension $n_1$ with $m_1$ components is $\leq$ dimension $n_2$ with $m_2$ components iff $n_1 < n_2$ or $n_1 = n_2$ and $m_1 \leq m_2$. If $\Lambda = 1$, then $\Gamma \to G$ is a non-equivariant $h$-fibration since $\Gamma$ is locally homeomorphic to $\Pi \times U$ over $U \subset G$, and $G$ is paracompact. So suppose that $\Lambda \neq 1$. If we view $\Gamma / \Lambda$ as a $\Pi \Lambda$-space, then around any $x \in \Gamma/\Lambda$ it is locally homeomorphic to $\Pi \Lambda \times_H S$, where $H$ is the isotropy group of $x$ under the $\Pi \Lambda$-action and $S$ is a slice through $x$, by Lemma \ref{echp33}. The isotropy group of $x$ under the $\Gamma$-action is conjugate to $\Lambda$, so $H$ is subconjugate to $\Lambda$ in $\Gamma$. Note also that $H \cap \Pi = 1$. Since $G / \Lambda$ is paracompact Hausdorff, it suffices to prove that $\Pi \Lambda \times_H S \to \Lambda \times_H S$ is an $h\Lambda$-fibration. Indeed, if this is the case then the open cover of $G/ \Lambda$ induced by the $\Lambda \times_H S$ has a $\Lambda$-numerable refinement, on which we can apply Lemma \ref{echp9} (noting that the pullback of an $h\Lambda$-fibration along the inclusion of any subspace of its codomain is also an $h \Lambda$-fibration).   Since $H \cap \Pi = 1$, and $X \times_H Y \cong (X \times Y)/H$ for a suitable $H$-action, Lemma \ref{echp1} implies that we have a pullback square:
		
		\[
		\begin{tikzcd}
			\Pi \Lambda \times_H S \arrow{r} \arrow{d} & \Pi \Lambda \times_H * \arrow{d}{} \\
			\Lambda \times_H S \arrow[swap]{r}{} & \Lambda \times_H * 
		\end{tikzcd}
		\]
		
		Since $H$ is subconjugate to $\Lambda$ in $\Gamma$, either $H$ has a lower dimension/ fewer connected components than $\Lambda$, or $\Lambda \times_H * = *$, and in both cases we can conclude that $\Pi \Lambda \times_H * \to \Lambda \times_H *$ is an $hH$-fibration, by induction in the first case. If we have a lifting problem of $\Pi \Lambda$-spaces:
		
		\[
		\begin{tikzcd}
			X \arrow{r}{f} \arrow{d} & \Pi \Lambda \times_H * \arrow{d}{} \\
			X \times I \arrow[dashed]{ur} \arrow[swap]{r} & \Lambda \times_H * \\
		\end{tikzcd}
		\]
		
		then Lemma \ref{echp2} implies that $X \cong \Pi \Lambda \times_H f^{-1}(*)$, and so the lifting problem can be reduced to a lifting problem of $H$-spaces. Therefore, $\Pi \Lambda \times_H * \to \Lambda \times_H *$ is an $h\Pi \Lambda$-fibration and, therefore, an $h\Lambda$-fibration, as desired.
	\end{proof}
	
	More generally, we can deduce the equivariant covering homotopy property for principal bundles, where note that $\Gamma$ acts on $B$ via the quotient map $\Gamma \to G$:
	
	\begin{theorem} \label{echp4}
		If $p: E \to B$ is a numerable principal $(\Pi; \Gamma)$-bundle, then $p$ is an $h\Gamma$-fibration.
	\end{theorem}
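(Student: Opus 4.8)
The plan is to reduce the global statement to a local model via the patching property of Hurewicz fibrations, Lemma \ref{echp9}, and then to recognise that local model as a pullback of the map treated in Lemma \ref{echp3}. First I would produce a $\Gamma$-numerable open cover of the base $B$ from the given $\Gamma$-numerable cover $\{U_i\}$ of $E$. If $\lambda_i \colon E \to I$ are $\Gamma$-maps with $U_i = \lambda_i^{-1}((0,1])$, then since $\Pi \subseteq \Gamma$ each $\lambda_i$ is $\Pi$-invariant and descends to a $G$-map $\bar\lambda_i \colon B = E/\Pi \to I$; setting $V_i = \bar\lambda_i^{-1}((0,1])$ gives a $\Gamma$-numerable cover of $B$ with $p^{-1}(V_i) = U_i$. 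By Lemma \ref{echp9} it therefore suffices to show that each restriction $p \colon U_i \to V_i$ is an $h\Gamma$-fibration. Writing $U_i \cong \Gamma \times_{\Lambda} S$ (dropping the index, with $\Lambda \cap \Pi = 1$), the hypothesis that $\Pi$ is normal and meets $\Lambda$ trivially identifies the quotient $V_i = U_i/\Pi$ with $G \times_{\Lambda} S$, where $\Lambda$ acts through its embedding $\Lambda \hookrightarrow G$, and identifies $p$ with the map induced by $\Gamma \to G$.

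Next I would exhibit $p \colon \Gamma \times_\Lambda S \to G \times_\Lambda S$ as a pullback. Projecting $S$ to a point gives a commuting square with top edge $\Gamma \times_\Lambda S \to \Gamma/\Lambda$, bottom edge $G \times_\Lambda S \to G/\Lambda$, right edge $q \colon \Gamma/\Lambda \to G/\Lambda$ and left edge $p$. A direct orbit computation (again using that $\Pi$ is normal with $\Pi \cap \Lambda = 1$, in the spirit of Lemma \ref{echp1}) shows this square is a pullback, so $p$ is the base change of $q$ along $G \times_\Lambda S \to G/\Lambda$. Since $q$ is an $h\Lambda$-fibration by Lemma \ref{echp3}, and Hurewicz $\Lambda$-fibrations are stable under pullback (as the proof of Lemma \ref{echp3} already uses), $p$ is an $h\Lambda$-fibration.

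It remains to promote this to the statement that $p$ is an $h\Gamma$-fibration, and this is the step I expect to be the main obstacle. Here I would use Lemma \ref{echp2} exactly as in Lemma \ref{echp3}: given a $\Gamma$-equivariant lifting problem for $p$ with initial map $h_0 \colon X \to \Gamma \times_\Lambda S$, composing with the projection to $\Gamma/\Lambda = \Gamma \times_\Lambda *$ and applying Lemma \ref{echp2} yields $X \cong \Gamma \times_\Lambda X_0$, where $X_0 = h_0^{-1}(S)$ is a $\Lambda$-space. Under the adjunction $\Gamma \times_\Lambda (-) \dashv \mathrm{res}^\Gamma_\Lambda$, and using $(\Gamma \times_\Lambda X_0) \times I \cong \Gamma \times_\Lambda (X_0 \times I)$, a $\Gamma$-equivariant lift is the same datum as a $\Lambda$-equivariant lift of the restricted problem over $X_0$. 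That restricted $\Lambda$-problem is solved because $p$ is an $h\Lambda$-fibration, and the resulting $\Lambda$-lift extends back to a $\Gamma$-map which, agreeing with the required boundary data on $X_0$, solves the original problem.

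The delicate points are concentrated in this last paragraph: one must check that the adjunction correspondence carries the lifting data (the homotopy $H$ into $G \times_\Lambda S$ and the initial condition $h_0$) correctly, so that the $\Gamma$-extension of the $\Lambda$-lift genuinely covers $H$ and restricts to $h_0$ — this follows because two $\Gamma$-maps out of $\Gamma \times_\Lambda X_0$ agreeing on $X_0$ coincide. The remaining identifications (the descent of the numeration and the quotient $U_i/\Pi \cong G \times_\Lambda S$) are routine, and the pullback verification is the only place the group-theoretic hypotheses $\Pi \trianglelefteq \Gamma$ and $\Pi \cap \Lambda = 1$ are needed beyond the input of Lemma \ref{echp3}.
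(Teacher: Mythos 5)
Your proposal is correct and takes essentially the same route as the paper: reduce to the trivial bundle $\Gamma \times_\Lambda S \to G \times_\Lambda S$ via Lemma \ref{echp9}, use the pullback square onto $\Gamma/\Lambda \to G/\Lambda$ together with Lemma \ref{echp2} to convert $\Gamma$-equivariant lifting problems into $\Lambda$-equivariant ones, and conclude with Lemma \ref{echp3}. The only (immaterial) difference is the order of the last two steps: the paper first promotes $\Gamma/\Lambda \to G/\Lambda$ to an $h\Gamma$-fibration and then pulls back, whereas you first pull back the $h\Lambda$-fibration property to $p$ and then promote $p$ itself to an $h\Gamma$-fibration.
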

	
	\begin{proof}
		Using Lemma \ref{echp9}, numerability allows us to reduce to the case where $p: \Gamma \times_{\Lambda} S \to G \times_{\Lambda} S$ is a trivial principal $(\Pi; \Gamma)$-bundle. Since $\Lambda \cap \Pi = 1$, Lemma \ref{echp1} implies that we have a pullback:
		
		\[
		\begin{tikzcd}
			\Gamma\times_{\Lambda} S \arrow{r} \arrow{d} & \Gamma \times_{\Lambda} * \arrow{d}{} \\
			G \times_{\Lambda} S \arrow[swap]{r}{} & G \times_{\Lambda} * 
		\end{tikzcd}
		\]
		
		So it suffices to show that $\Gamma / \Lambda \to G / \Lambda$ is an $h\Gamma$-fibration. As above, this follows from reducing a $\Gamma$-equivariant lifting problem:
		
		\[
		\begin{tikzcd}
			X \arrow{r}{f} \arrow{d} & \Gamma \times_{\Lambda} * \arrow{d}{} \\
			X \times I \arrow[dashed]{ur} \arrow[swap]{r} & G \times_{\Lambda} * 
		\end{tikzcd}
		\]
		
		to a $\Lambda$-equivariant lifting problem, using the fact that $X \cong \Gamma \times_{\Lambda} f^{-1}(*)$ by Lemma \ref{echp1}. Then we're done since Lemma \ref{echp3} tells us that $\Gamma / \Lambda \to G / \Lambda$ is an $h\Lambda$-fibration.
	\end{proof}

\begin{remark} \label{echp30}
	Applications of the ECHP appear throughout the literature. For example, \cite[Proposition 4.1]{K25} is the special case where $\Gamma$ is the semi-direct product $K \rtimes_{\alpha} G$, and so we have an extension of compact Lie groups $1 \to K \to \Gamma \to G \to 1$. Note that since $E$ is metrizable it is paracompact, \cite{R69}, and completely regular so defines a numerable principal $(K; \Gamma)$-bundle as in Definition \ref{echp31}, see Remark \ref{echp32}. Therefore, Theorem \ref{echp4} tells us that $E \to E/K$ is an $h\Gamma$-fibration. Via restriction along the splitting $G \to \Gamma$, we see that $E \to E/K$ is also an $hG$-fibration. In \cite{K25}, such results on quotient projections are extended to more general classes of groups than compact Lie groups, such as almost connected metrizable groups, \cite[Theorem 4.2]{K25}.
\end{remark}

In preparation for our discussion of $(\Pi; \Gamma)$-bundles with fibre $F$, we record the following lemma:
	
	\begin{lemma} \label{echp5}
		If $p: E \to B$ is an $h\Gamma$-fibration and $\Pi$ acts trivially on $B$, then $p/ \Pi : E/ \Pi \to B$ is an $hG$-fibration.
	\end{lemma}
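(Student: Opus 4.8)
The plan is to convert a $G$-equivariant lifting problem for $p/\Pi$ into a $\Gamma$-equivariant lifting problem for $p$, solve the latter using the hypothesis that $p$ is an $h\Gamma$-fibration, and then descend the solution along the quotient by $\Pi$. So suppose we are given a $G$-space $Y$, a $G$-map $f: Y \to E/\Pi$, and a $G$-homotopy $H: Y \times I \to B$ with $H(-,0) = (p/\Pi) \circ f$. We regard $Y$ as a $\Gamma$-space via $\Gamma \to G$, so that $\Pi$ acts trivially on $Y$ (and note that $\Pi$ likewise acts trivially on $E/\Pi$, the residual $\Gamma$-action on an orbit space of a normal subgroup factoring through $G$). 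First I would form the pullback $\tilde{Y} = Y \times_{E/\Pi} E$ of $f$ against the quotient map $p': E \to E/\Pi$, which is a $\Gamma$-space with $\Gamma$ acting diagonally, equipped with $\Gamma$-equivariant projections $\tilde{f}: \tilde{Y} \to E$ and $q: \tilde{Y} \to Y$ satisfying $p' \circ \tilde{f} = f \circ q$.

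The key step is the identification $\tilde{Y}/\Pi \cong Y$. Since $\Pi$ acts trivially on $Y$, each $\Pi$-orbit of $Y$ is a single point, so $f$ is vacuously injective on $\Pi$-orbits. Hence Lemma \ref{echp1}, applied with the compact Lie group $\Pi$ (a closed subgroup of $\Gamma$) in place of $G$, shows that quotienting the pullback square by $\Pi$ yields another pullback square. Because $Y/\Pi = Y$, $(E/\Pi)/\Pi = E/\Pi$, and the quotient of $p'$ is the identity of $E/\Pi$, this quotiented square exhibits $\tilde{Y}/\Pi$ as the pullback of $f$ along $\mathrm{id}_{E/\Pi}$, and therefore $q$ induces a homeomorphism $\tilde{Y}/\Pi \cong Y$.

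Next I would pose the $\Gamma$-equivariant lifting problem with upper map $\tilde{f}: \tilde{Y} \to E$ and lower map $H \circ (q \times 1): \tilde{Y} \times I \to B$; these agree at $t=0$, since $p \circ \tilde{f} = (p/\Pi) \circ p' \circ \tilde{f} = (p/\Pi) \circ f \circ q = H(-,0) \circ q$. As $p$ is an $h\Gamma$-fibration and $\tilde{Y}$ is a $\Gamma$-space, there is a $\Gamma$-equivariant lift $\tilde{G}: \tilde{Y} \times I \to E$. The composite $p' \circ \tilde{G}$ is $\Pi$-invariant (because $\tilde{G}$ is $\Pi$-equivariant and $p'$ is $\Pi$-invariant), so it factors through the quotient $(\tilde{Y} \times I)/\Pi \cong (\tilde{Y}/\Pi) \times I \cong Y \times I$, where the first isomorphism uses that $-\times I$ preserves quotients in the convenient category and the second is the identification above. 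This produces a $G$-map $\hat{H}: Y \times I \to E/\Pi$, and unwinding the definitions gives $\hat{H}(-,0) = f$ and $(p/\Pi) \circ \hat{H} = H$, so $\hat{H}$ solves the original problem.

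I expect the main obstacle to be the identification $\tilde{Y}/\Pi \cong Y$; the clean route is precisely the observation that triviality of the $\Pi$-action on $Y$ makes $f$ injective on $\Pi$-orbits, placing us exactly in the hypotheses of Lemma \ref{echp1}. Once this is secured, the remaining points—continuity of the descended map $\hat{H}$ and the two compatibility identities—are routine diagram chases, and no freeness assumption on the $\Pi$-action on $E$ is needed anywhere.
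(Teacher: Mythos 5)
Your proof is correct, and it takes a genuinely different route from the paper's, though both rest on the same two ingredients: Lemma \ref{echp1} and descent of $\Gamma$-maps along the $\Pi$-quotient. You pull back the quotient map $E \to E/\Pi$ along the given map $f\colon Y \to E/\Pi$ and apply Lemma \ref{echp1} with the group $\Pi$ (taking the space with singleton $\Pi$-orbits to be $Y$, and noting that $(E/\Pi)/\Pi = E/\Pi$ with induced map the identity), concluding $\tilde Y/\Pi \cong Y$; you then lift the homotopy through $p$ itself and descend the lift. This chain of reasoning is valid, including the subtler points: the identification $\tilde Y/\Pi \cong Y$ is exactly the pullback of $f$ along $\mathrm{id}_{E/\Pi}$, and the descended map is automatically $G$-equivariant and continuous since $(\tilde Y \times I)/\Pi \cong (\tilde Y/\Pi)\times I$ in CGWH. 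The paper instead pulls back $p$ along the homotopy $g\colon X\times I \to B$, obtaining an $h\Gamma$-fibration $q\colon P \to X\times I$ over the cylinder; there, Lemma \ref{echp1} identifies $P/\Pi$ with the pullback of $p/\Pi$ along $g$, which reduces the original problem to lifting against $q/\Pi$ over the identity homotopy of $X\times I$, and that residual problem is solved by descending a ``straightening'' map $\alpha\colon P_0\times I \to P$ over $X\times I$ (produced by the fibration property of $q$) and composing with $(x,t)\mapsto(k(x),t)$. So your pullback lives over $E/\Pi$ while the paper's lives over $B$, and you descend the lifted homotopy itself rather than an auxiliary straightening map. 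Your version buys economy: one pullback, one application of the fibration property, one descent, and no residual lifting problem. The paper's version keeps the entire argument over the cylinder $X\times I$, so the final covering homotopy is exhibited as an explicit product map. Your closing observations---that no freeness of the $\Pi$-action on $E$ is needed, and that $-\times I$ commutes with the $\Pi$-quotient---are correct, and both are equally (if tacitly) used by the paper.
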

	
	\begin{proof}
		Consider a lifting problem of $G$-spaces and $G$-maps:
		
		\[
		\begin{tikzcd}
			X \arrow{r}{} \arrow{d} & E / \Pi \arrow{d}{p / \Pi} \\
			X \times I \arrow[dashed]{ur} \arrow[swap]{r}{g} & B 
		\end{tikzcd}
		\]
		
		Let $P$ denote the pullback of $g$ along $p$, so $P$ is a $\Gamma$-space. Since each $G$-space is also a $\Gamma$-space on which $\Pi$ acts trivially, each $\Pi$-orbit of $X \times I$ consists of a single point and so Lemma \ref{echp1} implies that we have a pullback of $G$-spaces on the right hand side of the following diagram:
		
		\[
		\begin{tikzcd}
			X \arrow{r}{f} \arrow{d} & P / \Pi  \arrow{r} \arrow{d}{q / \Pi} & E / \Pi \arrow{d}{p / \Pi} \\
			X \times I \arrow[dashed]{ur} \arrow[swap]{r}{1} & X \times I \arrow[swap]{r}{g} & B \\
		\end{tikzcd}
		\]
		
		Since $q$, being a pullback of $p$, is an $h \Gamma$-fibration, there is a map $\alpha: P_0 \times I \to P$ over $X \times I$, where $P_0$ is $q^{-1}(X \times \{0\})$ and the restriction of $\alpha$ to $P_0 \times \{0\}$ is the inclusion of $P_0$ into $P$. Note that $f$ factors through the inclusion $P_0 / \Pi \to P / \Pi$, which allows us to define the map $k$ in the following diagram. It suffices to solve the lifting problem:
		
		\[
		\begin{tikzcd}
			X \arrow{r}{k \times \{0\}} \arrow{d} & P_0 / \Pi \times I \arrow{d}{q_0 / \Pi \times 1} \\
			X \times I \arrow[dashed]{ur}{H} \arrow[swap]{r}{1} & X \times I \\
		\end{tikzcd}
		\]
		
		For this define $H(x,t) = (k(x),t)$, so we're done. 
	\end{proof}

	We conclude the paper by relating the results above to $(\Pi; \Gamma)$-bundles with fibre $F$. We begin with:
	
	\begin{definition} \label{echp65}
	A $(\Pi; \Gamma)$-bundle with fibre $F$, $(\sigma, \mathcal{C})$, is called numerable if the associated principal $(\Pi; \Gamma)$-bundle, $Ass(\sigma, \mathcal{C})$, of Theorem \ref{echp61} is numerable, Definition \ref{echp31}. 
	\end{definition}

In particular, if $(\sigma, \mathcal{C})$ is numerable, there is a $G$-numerable open cover of $B$ over which $\sigma$ is locally homeomorphic to bundles of the form $(\Gamma \times_{\Lambda} S) \times_{\Pi} F \to (\Gamma \times_{\Lambda} S) \times_{\Pi} *$. We have:
	
	\begin{lemma} \label{echp6}
		If $(\sigma, \mathcal{C})$ is a numerable $(\Pi; \Gamma)$-bundle with fibre $F$, then $\sigma$ is an $hG$-fibration.
	\end{lemma}

\begin{proof}
	By Theorem \ref{echp61}, $\sigma$ is isomorphic to the bundle $Ass(\sigma, \mathcal{C}) \times_{\Pi} F \to Ass(\sigma, \mathcal{C}) \times_\Pi *$. The composite $Ass(\sigma, \mathcal{C}) \times F \to Ass(\sigma, \mathcal{C}) \times * \to Ass(\sigma, \mathcal{C}) \times_{\Pi} *$ is an $h \Gamma$-fibration, by Theorem \ref{echp4}. Therefore, $\sigma$ is an $hG$-fibration, by Lemma \ref{echp5}.
\end{proof}
	
Finally, by Lemma \ref{echp62} and Theorem \ref{echp4} we have:
	
	\begin{theorem} \label{echp7}
	A numerable $(\Pi; \Gamma)$-bundle with fibre $F$ satisfies the ECHP of Definition \ref{echp20}.
	\end{theorem}

	\bibliography{References}
	\bibliographystyle{alpha}

\end{document}